\PassOptionsToPackage{backref=page}{hyperref}
\documentclass[12pt,letterpaper,reqno]{amsart}

\usepackage[T1]{fontenc}
\usepackage[utf8]{inputenc}
\usepackage{amsmath,amssymb,amsfonts,amsthm}
\usepackage{mathtools}
\usepackage{aliascnt}
\usepackage{microtype}
\usepackage{enumitem}
\usepackage{booktabs}
\usepackage{xcolor}
\usepackage{doi}
\usepackage{hyperref}
\usepackage{tikz}
\usetikzlibrary{positioning,arrows.meta}
\usepackage{bookmark}
\usepackage[capitalize,noabbrev]{cleveref}

\hypersetup{
  pdfstartview={FitH},
  colorlinks=true,
  linkcolor=blue!55!black,
  citecolor=green!35!black,
  urlcolor=blue!55!black
}
\renewcommand*{\backref}[1]{}
\renewcommand*{\backrefalt}[4]{%
  \ifcase #1
  \or
    \quad$\hookleftarrow$, cited on page~#2%
  \else
    \quad$\hookleftarrow$, cited on pages~#2%
  \fi
}

\newtheorem{theorem}{Theorem}[section]

\newaliascnt{lemma}{theorem}
\newtheorem{lemma}[lemma]{Lemma}
\aliascntresetthe{lemma}

\newaliascnt{proposition}{theorem}
\newtheorem{proposition}[proposition]{Proposition}
\aliascntresetthe{proposition}

\newaliascnt{corollary}{theorem}

\aliascntresetthe{corollary}

\newaliascnt{conjecture}{theorem}

\aliascntresetthe{conjecture}

\newaliascnt{problem}{theorem}
\newtheorem{problem}[problem]{Problem}
\aliascntresetthe{problem}

\theoremstyle{definition}
\newaliascnt{definition}{theorem}

\aliascntresetthe{definition}

\newaliascnt{example}{theorem}

\aliascntresetthe{example}

\newaliascnt{remark}{theorem}

\aliascntresetthe{remark}

\crefname{theorem}{Theorem}{Theorems}
\Crefname{theorem}{Theorem}{Theorems}
\crefname{lemma}{Lemma}{Lemmas}
\Crefname{lemma}{Lemma}{Lemmas}
\crefname{proposition}{Proposition}{Propositions}
\Crefname{proposition}{Proposition}{Propositions}
\crefname{corollary}{Corollary}{Corollaries}
\Crefname{corollary}{Corollary}{Corollaries}
\crefname{conjecture}{Conjecture}{Conjectures}
\Crefname{conjecture}{Conjecture}{Conjectures}

\newcommand{\C}{\mathbb C}
\newcommand{\R}{\mathbb R}
\newcommand{\Hplus}{\mathbb H^{+}}
\newcommand{\Hminus}{\mathbb H^{-}}
\newcommand{\Chat}{\widehat{\mathbb C}}
\newcommand{\Rhat}{\widehat{\mathbb R}}
\newcommand{\PSL}{\operatorname{PSL}_{2}(\mathbb Z)}
\newcommand{\dist}{\operatorname{dist}}

\begin{document}

\title[Three omitted values and non-Blaschke point divisors]
{Three omitted values and non-Blaschke point divisors in half-planes}

\author[Q.~Tang]{Quanyu Tang$^{*}$}
\thanks{$^{*}$Corresponding author. Email: \texttt{tangquanyu827@gmail.com}.}
\address{School of Mathematical Sciences, University of Science and Technology of China, Hefei 230026, P.~R.~China}
\email{tangquanyu827@gmail.com}

\author[B.~Cui]{Bokai Cui}
\address{School of Mathematics and Statistics, Xi'an Jiaotong University, Xi'an 710049, P.~R.~China}
\email{dioxin@stu.xjtu.edu.cn}

\author[W.~He]{Wei He}
\address{School of Mathematical Sciences, Chongqing Normal University, Chongqing 401331, P.~R.~China}
\email{weih5907@gmail.com}

\author[T.~Hu]{Tao Hu}
\address{
Department of Information, Risk, and Operations Management, McCombs School of Business,
The University of Texas at Austin,
Austin, TX 78712, USA
}
\email{tao\_hu@utexas.edu}

\author[Y.~Li]{Yanyang Li}
\address{School of Mathematics, Southeast University, Nanjing 211189, P.~R.~China}
\email{liyanyang1219@gmail.com}

\author[K.~Wang]{Ke Wang}
\address{School of Nuclear Science and Technology, Xi'an Jiaotong University, Xi'an 710049, P.~R.~China}
\email{wangke07232026@163.com}

\author[Z.~Yu]{Zijun Yu}
\address{School of Mathematics and Statistics, Xi'an Jiaotong University, Xi'an 710049, P.~R.~China}
\email{211744905@qq.com}

\subjclass[2020]{Primary 30D35; Secondary 30H15, 30F35}
\keywords{meromorphic function, bounded type, omitted value, Blaschke condition, Farey tessellation}

\begin{abstract}
We construct a real meromorphic function $F$ on $\mathbb C$ such that $F^{-1}(\{0,1,\infty\})\subset\mathbb R$, while $F$ is not of bounded type in either half-plane. More strongly, for every $a\in\widehat{\mathbb C}\setminus\{0,1,\infty\}$, the $a$-point divisor in either half-plane fails the Blaschke condition. Thus the construction provides an independent negative answer to a question going back to Nevanlinna's 1925 work that had remained open for over a century. Postcomposition gives the analogous counterexample for any prescribed triple of distinct values in the Riemann sphere. The core construction and proof were generated during an autonomous run of GPT-5.6 Sol Ultra.
\end{abstract}

\maketitle

\section{Introduction}

A meromorphic function in a domain is said to be of \emph{bounded type} if it
is a quotient of two bounded holomorphic functions; equivalently, it is
meromorphic of bounded characteristic.  For $x>0$, write
$\log^{+}x=\max\{0,\log x\}$ and $\log^{-}x=\max\{0,-\log x\}$.  In his study
of value distribution in angular domains, Nevanlinna introduced half-plane
characteristics and proved, in particular, that a meromorphic function of
finite order on the plane is of bounded type in a half-plane whenever it
omits three distinct values of the Riemann sphere there
\cite{Nevanlinna1925}.  Ostrovskii weakened the finite-order assumption to
\[
 \int_1^\infty \frac{\log^{+}T(r,F)}{r^2}\,dr<\infty,
\]
where $T(r,F)$ is the usual Nevanlinna characteristic
\cite{Ostrovskii1961}; see also
\cite[Chapter~III, Theorems~3.1 and~3.3]{GoldbergOstrovskii2008}.
Gol'dberg subsequently showed that the unrestricted half-plane analogue of
the logarithmic-derivative lemma used by the classical argument is false,
even with an arbitrarily prescribed divergent error term
\cite{Goldberg1975}.

Without the assumption of meromorphic continuation to the whole plane, the
conclusion fails.  Lehto and Virtanen observed that the classical elliptic
modular function is meromorphic in a half-plane, omits three values there,
and is not of bounded type \cite[Section~18]{LehtoVirtanen1957}.  More recently, Eremenko, Kulikov, and Sodin obtained a sharp result for
functions defined in variable neighborhoods of a half-plane
\cite{EremenkoKulikovSodin2026}.  For an even, positive, continuous function
$m$ that is nonincreasing on $[0,\infty)$, they considered
$\mathbb H(-m)=\{x+iy:y>-m(x)\}$ and proved that convergence of
\[
 \int_1^\infty\frac{\log^{-}m(t)}{t^2}\,dt
\]
forces bounded type in $\Hplus$, whereas divergence permits a counterexample
on $\mathbb H(-m)$.

The unrestricted question is a century-old problem going back to
Nevanlinna's 1925 work on meromorphic functions in angular domains
\cite{Nevanlinna1925}.  Nevanlinna proved that the answer is affirmative
under the additional assumption of finite order, but the unrestricted case
remained open for over a century.  Eremenko, Kulikov, and Sodin
recently returned to this question and explicitly recorded it in the
following form \cite[Question~2]{EremenkoKulikovSodin2026}.

\begin{problem}[Nevanlinna's century-old half-plane problem]\label{prob:nevanlinna}
Let $F$ be meromorphic on $\C$.  If $F$ omits three distinct values of
$\Chat$ in a half-plane, must $F$ be of bounded type in that half-plane?
\end{problem}

There is a related history for meromorphic functions whose point divisors are
confined to lines or rays, beginning with Edrei's work
\cite{Edrei1955}.  In the precise line case, Bergweiler and Eremenko proved
the following dichotomy: if the complete preimages of three distinct values
lie on $\R$, then either the real axis is mapped into a circle, or the function
has the explicit exponential-quotient form described in
\cite{BergweilerEremenko2009}; in the latter case the function has order at
most one.  The finite-order conclusion therefore does not apply to the
real-meromorphic branch in which the construction below lies.

We use the notation $\Hplus=\{z:\operatorname{Im}z>0\}$,
$\Hminus=\{z:\operatorname{Im}z<0\}$, $\Chat=\C\cup\{\infty\}$, and
$\Rhat=\R\cup\{\infty\}$.  A meromorphic function $F$ on $\C$
is called \emph{real} if $F(\overline z)=\overline{F(z)}$.  For
$a\in\C$, the $a$-point divisor of $F$ consists of the zeros of $F-a$,
with their multiplicities; for $a=\infty$, it is the pole divisor.  A discrete
divisor $Z$ in either half-plane satisfies the \emph{Blaschke condition} if
\[
 \sum_{z\in Z}\frac{|\operatorname{Im}z|}{1+|z|^2}<\infty,
\]
with multiplicities counted.  We write $\dist$ for Euclidean distance.  Our
main result is as follows.

\begin{theorem}\label{thm:main}
There exists a real meromorphic function $F$ on $\C$ such that
\[
 F^{-1}\bigl(\{0,1,\infty\}\bigr)\subset\R,
\]
and each of the three sets $F^{-1}(0)$, $F^{-1}(1)$, and
$F^{-1}(\infty)$ is infinite.  Moreover, for every
$a\in\Chat\setminus\{0,1,\infty\}$, the $a$-point divisor of $F$ in each of
$\Hplus$ and $\Hminus$ fails the Blaschke condition.  In particular, $F$ is
not of bounded type in either half-plane.
\end{theorem}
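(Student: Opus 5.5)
We will build $F$ as a composition $F=\lambda\circ\Phi$ on $\Hplus$, extended by reflection. Here $\lambda:\Hplus\to\C\setminus\{0,1\}$ is the elliptic modular function, the universal covering of the thrice-punctured sphere, which is automorphic under the level-two congruence group. $\Phi$ is a suitable holomorphic map. The keywords (Farey tessellation, $\PSL$) suggest the following plan.

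\textbf{Step 1: a topological model.} Choose a locally univalent real-analytic boundary behaviour. We want a map $\Phi:\overline{\Hplus}\to\overline{\Hplus}$ (closures in $\Chat$) that is holomorphic on $\Hplus$, sends $\R$ into $\Rhat$ monotonically, and covers $\Rhat$ infinitely often. Then $\lambda\circ\Phi$ omits $0,1,\infty$ in $\Hplus$. It takes real values on $\R$, since $\lambda$ is real on the Farey edges, in particular on $\Rhat$ in the limiting sense. Real-analytic continuation across $\R$ requires $\Phi(\R)$ to avoid the irrational points, where $\lambda$ has no boundary values.

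So instead of composing with $\lambda$ directly, we would glue. Take the Farey tessellation by ideal triangles. Construct a simply connected Riemann surface by gluing half-planes/triangles along edges following an infinite path in the dual tree, and then reflecting across the real line. Concretely, let $S$ be the surface obtained from a chain of ideal triangles $T_0,T_1,T_2,\dots$ (and $T_{-1},\dots$), consecutive ones sharing an edge and chosen in the Farey tessellation. Take the double of $S$ across its boundary edges, which map to $\Rhat$ under $\lambda$.

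The function $\lambda$ on each triangle is a conformal map onto $\Hplus$ or $\Hminus$ with vertices going to $0,1,\infty$. The resulting surface, a ``line complex'' Speiser surface spread over $\Chat$ with three logarithmic-type branch values, is simply connected and open. It is parabolic if the chain is a line in the tree (it then corresponds to a function of the form considered by Nevanlinna for finitely many triangles; an infinite chain needs a type argument). We obtain $F$ via uniformization $\C\to S$. Reality comes from the symmetry of the construction, and the $a$-points lie off $\R$ only inside the triangle interiors. Each triangle contributes exactly one $a$-point in the corresponding half-plane, for each $a\notin\Rhat$.

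\textbf{Step 2: parabolicity.} We must show the surface is conformally $\C$, not the disk. We plan to use a quasiconformal gluing argument. Each piece (pair of triangles over a strip) is mapped to a standard strip $\{k<\operatorname{Re} z<k+1\}$ by a quasiconformal map with uniformly bounded dilatation. Teichmüller–Belinskii type estimates then give an asymptotically conformal identification, or at least that the resulting $F=G\circ\psi$ with $\psi$ a uniformly quasiconformal homeomorphism of $\C$ onto the surface's model. Since a uniformly quasiconformal image of $\C$ is $\C$, parabolicity follows and $F$ is meromorphic on $\C$.

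\textbf{Step 3: preimages on $\R$ and the failure of Blaschke.} The preimages of $0,1,\infty$ are the vertices, which lie on $\R$ by symmetry. There are infinitely many of each, provided the chain turns at all three vertex types infinitely often. For $a\in\Hplus$, the $a$-points in $\Hplus$ are one per triangle. The triangle path is chosen to wind so that the triangles of the Farey chain become ``thin'' yet far from the real axis relative to their size. The main obstacle is showing $\sum \operatorname{Im}z_k/(1+|z_k|^2)=\infty$, which amounts to proving the uniformizing map does not collapse the triangles toward $\R$ too fast.

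We would choose the path in the Farey tree so that the quasiconformal model places the $k$th $a$-point at height comparable to its distance from the origin, for instance along the imaginary axis direction. This can be arranged by letting blocks of the chain spiral around a vertex (a long run $L,L,\dots$ gives a logarithmic end near that vertex). We then compare with the model via distortion bounds: Hölder continuity of quasiconformal maps gives $\operatorname{Im}z_k\gtrsim |z_k|^{1-\epsilon}$, together with counting $\#\{|z_k|<r\}\gtrsim r$, so the sum diverges like $\sum r^{-\epsilon}$. Finally, failure of Blaschke for any single $a$ precludes bounded type, since a bounded-type function's $a$-points satisfy the Blaschke condition.
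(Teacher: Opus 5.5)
There are two genuine gaps. The first is topological. The second is quantitative, and it is fatal to the strategy as designed.

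\textbf{Step 1 produces the wrong surface.} Doubling a bi-infinite path of Farey triangles across its free edges does not give a simply connected surface. Each triangle of the path has exactly one free edge, lying on the left or on the right of the path. The free edges and the finite-degree vertices therefore fill two open arcs of the boundary circle of $S$, separated by the two ends of the path. Doubling a closed disc along its boundary circle minus two points gives a sphere minus two points, which is an annulus. The only exception is the complete fan of triangles at one cusp, which yields a periodic function of order one. The paper instead takes $\Omega$ to be the region above a \emph{single} proper chain $\mathcal P$ of Farey edges, so that $\mathcal P\cup\{\infty\}$ is a Jordan curve. Then a Riemann map $\Phi:\Hplus\to\Omega$ with $\Phi(\infty)=\infty$, Schwarz reflection of $q\circ\Phi$ across the edges, and a cusp-coordinate argument at each vertex give $F$ on $\C$ directly; the cusp argument works because only finitely many triangles of $\Omega$ meet at each vertex. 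There is no type problem at all, so your Step~2 is unnecessary.

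\textbf{Steps 2--3 cannot produce a counterexample.} Suppose $F=G\circ\phi$ with $\phi$ a $K$-quasiconformal self-map of $\C$ and $G$ a model of finite order, as any bounded-geometry model built from unit pieces with $\#\{|z_k|<r\}\asymp r$ is. Then $|\phi(z)|\le C|z|^K$ forces $F$ to have finite order. Nevanlinna's theorem, quoted in the introduction, then makes $F$ of bounded type in $\Hplus$, since $F$ omits $0,1,\infty$ there, so its $a$-points \emph{must} satisfy the Blaschke condition. A counterexample must therefore have infinite order (indeed it must violate Ostrovskii's condition), and uniformly bounded geometry is ruled out from the start.

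Your estimate also fails on its own terms. The bound $\operatorname{Im}z_k\gtrsim|z_k|^{1-\epsilon}$ with linear counting only bounds the Blaschke sum below by $\sum|z_k|^{-1-\epsilon}$, which is finite. Any power loss from H\"older distortion is fatal.

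The missing idea is to pack enormously many triangles near $\R$ while controlling the Green function by domain monotonicity rather than by distortion of $\Phi$. The paper pushes $\mathcal P$ below the graph of $\beta\lesssim e^{-x^2/2}$, using Farey orders $L_n$ that grow super-exponentially. Then $\Omega$ contains the bi-Lipschitz quasi-half-plane $\Omega_0=\psi(\Hplus)$, and $G_\Omega(p,w)\ge G_{\Omega_0}(p,w)\ge y/(12(1+x^2))$ whenever $2\beta(x)\le y\le1$. The explicit orbit points $\gamma_{m,d}\tau_a+2n_m$ number $\varphi(2m)$ at level $m$, sit at height $\asymp m^{-2}$ and abscissa $\asymp\sqrt{\log m}$, and give the divergent sum $\sum_m\varphi(2m)/(m^2\log m)$.
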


The class of functions of bounded type is invariant under postcomposition by
a M\"obius transformation: if $F=U/V$ and
$M(w)=(\alpha w+\beta)/(\gamma w+\delta)$, then
$M\circ F=(\alpha U+\beta V)/(\gamma U+\delta V)$, and the converse follows
by applying $M^{-1}$.  Consequently, $\{0,1,\infty\}$ in
Theorem~\ref{thm:main} can be replaced by any prescribed triple of distinct
values of $\Chat$, and the assertion about all remaining point divisors is
preserved.  An affine change of the independent variable gives the analogous
statement for every Euclidean half-plane.  Thus Theorem~\ref{thm:main}
answers Problem~\ref{prob:nevanlinna} in the negative.

The ingredients of the construction have distinct precedents.  The modular
lambda function and $\Gamma(2)$-orbit counting already enter the local
examples of \cite{EremenkoKulikovSodin2026}.  Conformal mapping followed by
Schwarz reflection is central to the classical theory of comb functions; see
\cite{EremenkoYuditskii2012} and the references therein.  Farey
tessellations and reflection constructions also occur in the study of
conformally balanced trees \cite{IvriiLinRohdeSygal2023}.  The point needed
here is to choose a proper Farey-edge boundary below a quantitatively
controlled graph.  This simultaneously permits meromorphic reflection across
the whole real axis and retains enough Green-function mass to violate the
Blaschke condition.

\subsection{Chronology and priority}

After the present proof had been obtained, we became aware of the independent
work of He and Zhang~\cite{HeZhang2026}, who also gave a counterexample to
Problem~\ref{prob:nevanlinna}. An earlier candidate version containing the core construction and proof of
the present paper was preserved in a timestamped GitHub release on
August~24, 2026, at
\href{https://github.com/QuanyuTang/three-omitted-values-bounded-type-candidate-proof}
{the accompanying archival repository}~\cite{TangGitHub2026}. The first version of \cite{HeZhang2026} was submitted to arXiv on August~25, 2026. \textbf{Thus, as a matter of documented chronology, a candidate proof
containing the core construction and argument of the present paper was
archived on August~24, 2026, before the first arXiv submission of
He and Zhang~\cite{HeZhang2026}.}

The two arguments share a common high-level framework: both start from the
modular covering, construct a domain bounded by a locally finite chain of
Farey edges, uniformize that domain, and use Schwarz reflection to obtain a
meromorphic function on the whole plane.  The quantitative mechanisms
producing failure of bounded type are, however, different.  He and Zhang use
finite Farey refinements in disjoint bays together with a diagonal argument
for divergent Green-weighted spherical area.  The present construction
instead places a Farey-edge boundary below an explicitly controlled
Gaussian-decaying comparison graph and uses explicit $\Gamma(2)$-orbits of
individual $a$-points, together with a Green-function lower bound and
totient counting, to force divergence of the Blaschke sum.  This also yields
the stronger fiberwise conclusion in Theorem~\ref{thm:main}: every
$a\in\Chat\setminus\{0,1,\infty\}$ has a non-Blaschke $a$-point divisor in
each half-plane.

The chronology above is recorded to document the independent development of
the present argument.

\subsection{Declaration of AI usage}

The core mathematical construction and proof underlying this paper were
generated during \href{https://chatgpt.com/s/cx_6a8c620c99308191972b12d53e3d314f}{\texttt{an autonomous run}} of GPT-5.6 Sol using OpenAI's
\texttt{ultra} setting.  The run lasted 7 hours, 14 minutes, and 14 seconds.
The resulting candidate proof was subsequently subjected to human checking,
mathematical auditing, and revision. The initial draft of the manuscript
was also generated by AI and was subsequently reviewed and lightly revised
by the authors.  The authors have reviewed the final manuscript and take
full responsibility for all mathematical statements, proofs, conclusions,
and any remaining errors.

For transparency concerning the chronology of the work, a timestamped
archival version of the AI-generated candidate proof was preserved in a
GitHub release on August~24, 2026, at
\href{https://github.com/QuanyuTang/three-omitted-values-bounded-type-candidate-proof}
{\texttt{three-omitted-values-bounded-type-candidate-proof}}.
The independent preprint of He and Zhang~\cite{HeZhang2026}, which also gives
a counterexample to Nevanlinna's half-plane problem, was first submitted to
arXiv on August~25, 2026.  The two proofs share a modular/Farey/reflection
framework but use different quantitative mechanisms, as explained in the
Introduction.  The GitHub record is cited here to document the independent
development and provenance of the present proof.

\subsection{Paper organization}
The paper is organized as follows.  Section~2 records the modular covering
and the Blaschke criterion used below.  Section~3 constructs a comparison
domain with rapidly decaying boundary and proves a quantitative Green-function
estimate.  In Section~4 we place a proper Farey-edge path below that boundary
and use Schwarz reflection, including a separate analysis at every cusp
(Farey vertex), to obtain a meromorphic function on the plane. Section~5 constructs, inside every remaining point divisor, an explicit
subset that fails the Blaschke condition and completes the proof of
Theorem~\ref{thm:main}.

\section{The modular covering and the Blaschke criterion}

A domain is called \emph{Greenian} if it admits a Green function.  For such a
domain $D$, let $G_D(\zeta,z)$ denote its Green function with pole at
$\zeta$, normalized so that
$G_D(\zeta,z)-\log(1/|z-\zeta|)$ is harmonic near $z=\zeta$.  For a discrete
divisor $Z$ in $\Hplus$, the Blaschke condition above is equivalently
$\sum_{z\in Z}G_{\Hplus}(i,z)<\infty$, where multiplicities are counted
and a possible point at $i$ is omitted.  Every zero divisor of a
nonzero bounded holomorphic function satisfies this condition.  We use these
standard facts in their Green-function form; see, for example,
\cite[Chapter~II]{Garnett2007}.

The Farey tessellation is the ideal triangulation of $\Hplus$ with vertex set
$\mathbb Q\cup\{\infty\}$ in which two reduced fractions $a/b$ and $c/d$
with $|ad-bc|=1$ are joined by the hyperbolic geodesic.  We call these
geodesics Farey edges and, in the modular-covering discussion, refer to the
Farey vertices also as cusps.  Here
$\PSL=\operatorname{SL}_2(\mathbb Z)/\{\pm I\}$, acting by
$\begin{psmallmatrix}a&b\\ c&d\end{psmallmatrix}\!\cdot z=(az+b)/(cz+d)$.

\begin{proposition}\label{prop:modular-cover}
There is a universal covering
$q:\Hplus\to\Chat\setminus\{0,1,\infty\}$ with the following properties.
\begin{enumerate}
\item Its deck group, namely the group of conformal automorphisms $\gamma$ of
$\Hplus$ satisfying $q\circ\gamma=q$, is
\[
 \Gamma(2)=
 \left\{
 \begin{pmatrix}\alpha&\beta\\ \gamma&\delta\end{pmatrix}
 \in\PSL:
 \alpha,\delta\ \text{odd},\quad \beta,\gamma\ \text{even}
 \right\}.
\]
\item The inverse image $q^{-1}(\Rhat)$ is exactly the union of the open
Farey edges in $\Hplus$.
\item The three $\Gamma(2)$-orbits of Farey vertices are represented by the
three nonzero parity classes modulo $2$ of integer vectors with coprime
coordinates.  The limiting value of $q$ at a Farey vertex $r$, denoted by
$s(r)$, is one of $0,1,\infty$ and depends bijectively on this parity class.
\item Let $r$ be a Farey vertex, let $A\in\PSL$ satisfy $A(r)=\infty$, and
put $s=s(r)$.  Define
\[
 \chi_s(\zeta)=
 \begin{cases}
  \zeta-s,&s\in\{0,1\},\\
  1/\zeta,&s=\infty.
 \end{cases}
\]
There is a function $h_{r,A}$, holomorphic near the origin, such that
\begin{equation}\label{eq:cusp-coordinate}
 \chi_s\bigl(q(A^{-1}W)\bigr)=h_{r,A}(e^{\pi iW}),
 \qquad h_{r,A}(0)=0,\qquad h_{r,A}'(0)\ne0,
\end{equation}
whenever $\operatorname{Im}W$ is sufficiently large.
\end{enumerate}
\end{proposition}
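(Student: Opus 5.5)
We take $q$ to be the classical modular lambda function and obtain it by Schwarz reflection from a Riemann map of the ideal triangle $T$ with vertices $0,1,\infty$. By the Riemann mapping theorem and Carathéodory's extension, there is a conformal map $q_0:T\to\Hplus$ extending homeomorphically to the closure in $\Chat$, with the vertices $0,1,\infty$ of $T$ sent to $0,1,\infty$ respectively. The three sides of $T$ are Farey edges, and they are mapped onto the three arcs of $\Rhat\setminus\{0,1,\infty\}$. Reflecting across each side of $T$ and across the corresponding arc of $\Rhat$ extends $q_0$ to $q$ on the neighbouring triangles, with values in $\Hminus$.

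Iterating the reflections along the Farey tessellation defines $q$ on all of $\Hplus$. The result is single-valued because the tessellation is simply connected as a dual tree, so each triangle is reached by a unique reduced chain of reflections. It is locally conformal on the open edges, where the reflection principle applies. Hence $q$ is holomorphic and locally injective on $\Hplus$, and it omits $0,1,\infty$ because the cusps lie on $\Rhat$, not in $\Hplus$.

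Next we identify the deck group. The composition of two reflections in sides of the tessellation preserves $q$. Reflection in the imaginary axis composed with reflection in the geodesic from $0$ to $1$, or in the line $\operatorname{Re}z=1$, generates the index-two orientation-preserving subgroup. The standard computation shows this subgroup is generated by $z\mapsto z+2$ and $z\mapsto z/(2z+1)$, which is the free group $\Gamma(2)$, and that $\Gamma(2)$ acts simply transitively on the triangles of the same colour. The fundamental domain $T\cup\sigma(T)$, with $\sigma$ the reflection in the imaginary axis, maps bijectively onto $\Chat\setminus\{0,1,\infty\}$ minus the arc identifications. This gives that $q$ is a covering whose deck group is exactly $\Gamma(2)$, proving (1). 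The preimage of $\Rhat$ is the union of images of sides of $T$ under the reflection group, which is exactly the set of open Farey edges, proving (2).

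For (3), $\Gamma(2)$ is the kernel of reduction $\PSL\to\operatorname{SL}_2(\mathbb Z/2)$. A vertex $a/b$ corresponds to the primitive vector $(a,b)$, and $\Gamma(2)$ fixes its parity class. Conversely, $\PSL$ is transitive on primitive vectors and $\operatorname{SL}_2(\mathbb Z/2)$ permutes the three nonzero classes transitively. Hence the $\Gamma(2)$-orbits of cusps are exactly the three classes, represented by $\infty\sim(1,0)$, $0\sim(0,1)$ and $1\sim(1,1)$. By construction $s$ is $\infty,0,1$ respectively on these, and $s$ is constant on orbits since $q$ is $\Gamma(2)$-invariant and the boundary limit is obtained inside the adjacent triangles.

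The main obstacle is (4), the cusp coordinate. Write $r=\gamma(r_0)$ with $\gamma\in\Gamma(2)$ and $r_0\in\{0,1,\infty\}$. Then $q\circ A^{-1}=q\circ B^{-1}$ with $B=A\gamma\in\PSL$, $B(r_0)=\infty$, and $B$ differs from a fixed $A_{r_0}$ by an element of the stabilizer of $\infty$, namely $W\mapsto W+n$. So it suffices to treat $r_0=\infty$ with $A=\mathrm{id}$, and the finitely many model cases $r_0=0,1$, where $h$ changes by $h(\pm e^{\pi i n}\cdot)$. In each case the stabilizer of the cusp in $\Gamma(2)$, conjugated to $\infty$, is generated by $W\mapsto W+2$. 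So $g(W)=\chi_s(q(A^{-1}W))$ is $2$-periodic and factors as $h(e^{\pi iW})$ with $h$ holomorphic on a punctured disc. The function $g$ is bounded for large $\operatorname{Im}W$, because $\chi_s\circ q\to0$ as one approaches the cusp inside the two adjacent triangles. The punctured disc corresponds to a strip of width $2$ containing exactly two triangles, and the claim follows from the Carathéodory extension on each triangle with $q$ tending to $s$ at the vertex. By Riemann's removable singularity theorem, $h(0)=0$. Finally, $h'(0)\neq0$ because $h$ is injective near $0$: a full period strip covers a punctured neighbourhood of $s$ exactly once, since two triangles cover one upper and one lower half-disc near $s$. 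An injective holomorphic map has nonvanishing derivative.
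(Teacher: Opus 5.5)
Your proposal is correct and follows essentially the paper's route: the paper simply cites the standard theory of the modular $\lambda$ function for (1)--(3). It proves (4) as you do, via normality of $\Gamma(2)$, the stabilizer $\langle W\mapsto W+2\rangle$ of $\infty$, and conformality of the quotient map near the cusp; your removable-singularity and injectivity-on-a-period-strip argument makes that last step explicit.

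One small slip: in (3), transitivity of $\PSL$ on primitive vectors and of $\operatorname{SL}_2(\mathbb Z/2)$ on the nonzero classes does not by itself show that each parity class is a single $\Gamma(2)$-orbit. That fact follows from your own setup: every Farey triangle is a $\Gamma(2)$-image of $T$ or $\sigma(T)$, whose vertices $0,\pm1,\infty$ lie in the orbits of $0,1,\infty$ (note $-1=1-2$), so there are at most three orbits, and parity separates them.
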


\begin{proof}
Take a M\"obius normalization of the classical modular lambda function.
The standard theory of $\lambda$ gives a universal covering of
$\Chat\setminus\{0,1,\infty\}$ with deck group $\Gamma(2)$; it also shows
that the Farey triangles are mapped alternately onto the upper and lower
half-planes, so the real locus is precisely the union of their open edges.
See \cite[Chapter~7, Sections~3.4--3.5, pp.~278--282]{Ahlfors1979},
\cite[Section~23]{Akhiezer1990}, and the
recent use of the same normalization in
\cite[Section~3.3]{EremenkoKulikovSodin2026}.  Reduction modulo $2$
identifies the three cusp orbits with the three nonzero parity classes of
integer vectors with coprime coordinates, giving the third assertion.

It remains only to record the local normalization at a cusp.  The subgroup
$\Gamma(2)$ is normal in $\PSL$, and the stabilizer of $\infty$ in
$\Gamma(2)$ is generated by $W\mapsto W+2$.  Hence $Q=e^{\pi iW}$ is a
primitive local coordinate on the quotient at every cusp, after conjugation
by $A$.  The quotient map is conformal from a punctured cusp neighborhood
onto a punctured neighborhood of $s(r)$.  Therefore
$\chi_s(q(A^{-1}W))$ is a holomorphic function of $Q$ with a simple zero at
$Q=0$, which is exactly \eqref{eq:cusp-coordinate}.
\end{proof}

\section{A comparison domain}

For $z\in\Hplus$, define
\[
 C(z)=\frac1\pi\int_{\R}\frac{e^{-t^2}}{t-z}\,dt.
\]
Inserting $(t-z)^{-1}=i\int_0^\infty e^{-i(t-z)u}\,du$ and evaluating the
Fourier transform of the Gaussian gives
\begin{equation}\label{eq:fourier-C}
 C(z)=\frac{i}{\sqrt\pi}\int_0^\infty e^{-u^2/4}e^{iuz}\,du.
\end{equation}
The representation \eqref{eq:fourier-C}, together with differentiation under
the integral sign, extends $C$ and $C'$ continuously to
$\overline{\Hplus}$.  Consequently,
\[
 |C(z)|\le1,\qquad |C'(z)|\le\frac2{\sqrt\pi}
 \quad(z\in\overline{\Hplus}),
 \qquad \operatorname{Im}C(x)=e^{-x^2}\quad(x\in\R).
\]
The original Cauchy integral gives $\operatorname{Im}C(z)>0$ for
$z\in\Hplus$.

Fix $\varepsilon=10^{-3}$, set $\psi(z)=z+\varepsilon C(z)$, and put
$\delta=2\varepsilon/\sqrt\pi<0.00113$.  Integrating $C'$ along line
segments in $\overline{\Hplus}$ yields
\[
 (1-\delta)|z-w|\le |\psi(z)-\psi(w)|
 \le(1+\delta)|z-w|.
\]
Thus $\psi$ is a bi-Lipschitz homeomorphism of $\overline{\Hplus}$ onto
its image.  Since $|\psi(z)-z|\le\varepsilon$, this homeomorphism is
proper, in the sense that preimages of compact sets are compact.  On the
boundary, $\psi(t)=X(t)+i\varepsilon e^{-t^2}$, where
$X(t)=t+\varepsilon\operatorname{Re}C(t)$.  Since
$X'(t)\ge1-\delta$ and $|X(t)-t|\le\varepsilon$, the map $X$ is an
increasing homeomorphism of $\R$ onto $\R$.  Thus $\psi(\R)$ is the graph of
a positive function $\beta$.  Properness shows that
$\partial\psi(\Hplus)=\psi(\R)$, so $\psi(\Hplus)$ is one of the two
components of the complement of this graph.  Since
$\psi(iy)=iy+O(1)$ as $y\to\infty$, it is the upper component; hence
\[
 \Omega_0:=\psi(\Hplus)=\{x+iy:y>\beta(x)\}.
\]
The estimates $|X(t)-t|\le\varepsilon$ and
$t^2\ge X(t)^2/2-\varepsilon^2$ give
\begin{equation}\label{eq:beta-estimates}
 0<\beta(x)\le
 \varepsilon e^{\varepsilon^2}e^{-x^2/2},
 \qquad \|\beta'\|_\infty<10^{-3}.
\end{equation}
Indeed,
$|\beta'(X(t))|\le\varepsilon\sqrt{2/e}/(1-\delta)<10^{-3}$.

Let $p=\psi(i)$.  The following estimate is the only quantitative property
of $\Omega_0$ that will be needed.

\begin{lemma}\label{lem:green-comparison}
If $w=x+iy\in\Omega_0$, $0<y\le1$, and $\beta(x)\le y/2$, then
\begin{equation}\label{eq:green-comparison}
 G_{\Omega_0}(p,w)\ge\frac{y}{12(1+x^2)}.
\end{equation}
\end{lemma}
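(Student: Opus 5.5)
The plan is to pull everything back to $\Hplus$ by the conformal map $\psi$ and use the explicit Green function there. Since $\psi:\Hplus\to\Omega_0$ is conformal with $\psi(i)=p$, conformal invariance gives
\[
 G_{\Omega_0}(p,w)=G_{\Hplus}(i,z)=\tfrac12\log\frac{|z+i|^2}{|z-i|^2},\qquad z=\psi^{-1}(w).
\]
Apply the elementary inequality $\log(a/b)\ge (a-b)/a$ with $a=|z+i|^2$ and $b=|z-i|^2$, using $a-b=4\operatorname{Im}z$. This yields
\[
 G_{\Omega_0}(p,w)\ge \frac{2\operatorname{Im}z}{|z+i|^2}.
\]
So it suffices to bound $\operatorname{Im}z$ from below by a constant multiple of $y$, and to bound $|z+i|^2$ from above by a constant multiple of $1+x^2$.

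\textbf{Lower bound for $\operatorname{Im}z$.} First estimate the distance from $w$ to $\partial\Omega_0=\psi(\R)$, which is the graph of $\beta$. The hypothesis $\beta(x)\le y/2$ gives a vertical gap $y-\beta(x)\ge y/2$. Since $\|\beta'\|_\infty<10^{-3}$ by \eqref{eq:beta-estimates}, the graph lies below the cone through $(x,\beta(x))$ of slope $10^{-3}$. Hence
\[
 \dist(w,\psi(\R))\ge \frac{y/2}{\sqrt{1+10^{-6}}}.
\]
Next transfer this to $z$ using the bi-Lipschitz bound. For every $t\in\R$ we have $|z-t|\ge |\psi(z)-\psi(t)|/(1+\delta)\ge \dist(w,\psi(\R))/(1+\delta)$. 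Taking the infimum over $t$ gives
\[
 \operatorname{Im}z=\dist(z,\R)\ge \frac{y}{2(1+\delta)\sqrt{1+10^{-6}}}\ge 0.49\,y.
\]

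\textbf{Upper bound for $|z+i|^2$.} From $|\psi(z)-z|\le\varepsilon$ we get $|z-w|\le\varepsilon$. Thus $|\operatorname{Re}z|\le|x|+\varepsilon$, and $\operatorname{Im}z\le y+\varepsilon\le 1+\varepsilon$. Therefore
\[
 |z+i|^2\le(|x|+\varepsilon)^2+(2+\varepsilon)^2.
\]
With $\varepsilon=10^{-3}$, the right-hand side is easily at most $4.1(1+x^2)$; for instance, $(|x|+\varepsilon)^2\le 1.01x^2+0.01$. Combining the two bounds,
\[
 G_{\Omega_0}(p,w)\ge \frac{2\cdot 0.49\,y}{4.1(1+x^2)}>\frac{y}{12(1+x^2)}.
\]

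\textbf{Main obstacle.} The only delicate step is the distance estimate from $w$ to the boundary graph together with its transfer through $\psi^{-1}$. The estimate must use the Lipschitz bound on $\beta$, not just the single value $\beta(x)$, since the nearest boundary point need not lie directly below $w$. The numerical constants leave ample room, so crude bounds suffice everywhere else.
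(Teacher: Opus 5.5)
Your proposal is correct and follows essentially the same route as the paper. Both pull back by $\psi$ to the explicit Green function of $\Hplus$, apply $\log(a/b)\ge(a-b)/a$, bound $\operatorname{Im}\psi^{-1}(w)$ below through the Lipschitz-graph distance estimate and the upper bi-Lipschitz constant, and control the denominator using $|\psi(z)-z|\le\varepsilon$. The only differences are cosmetic: you use $\operatorname{Im}z\le y+\varepsilon$ where the paper uses $v<y$ (from $\operatorname{Im}C>0$), and your constants are sharper than needed.
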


\begin{proof}
Write $\zeta=u+iv=\psi^{-1}(w)$ and let
$L=\|\beta'\|_\infty<10^{-3}$.  The vertical gap
$y-\beta(x)$ is at least $y/2$, and hence
\[
 \dist(w,\partial\Omega_0)\ge\frac{y}{2\sqrt{1+L^2}}.
\]
The upper bi-Lipschitz estimate, applied to the distance from $\zeta$ to
$\R$, gives $\dist(w,\partial\Omega_0)\le(1+\delta)v$.  Therefore
$v>y/3$.  Moreover, $v<y\le1$, because
$y=v+\varepsilon\operatorname{Im}C(\zeta)$, and
$|u-x|\le\varepsilon$.  Conformal invariance and the explicit Green function
of $\Hplus$ give
\begin{align*}
 G_{\Omega_0}(p,w)
 &=G_{\Hplus}(i,u+iv)\\
 &=\frac12\log\frac{u^2+(v+1)^2}{u^2+(v-1)^2}\\
 &\ge\frac{2v}{u^2+(1+v)^2}
 \ge\frac{v}{2(1+u^2)}.
\end{align*}
Here the first inequality follows from
$\log(1+t)\ge t/(1+t)$ for $t\ge0$, and the second uses $v\le1$.
Since $v>y/3$ and $1+u^2\le2(1+x^2)$, this proves
\eqref{eq:green-comparison}.
\end{proof}

\section{A Farey-edge boundary and Schwarz reflection}

For every $n\in\mathbb Z$, let
$\mu_n=\min_{x\in[n,n+1]}\beta(x)>0$, and choose an integer $L_n\ge2$ such
that $(2L_n)^{-1}<\mu_n$.  Recall that the Farey sequence of order $L$ in
$[0,1]$ is the increasing sequence of reduced fractions in $[0,1]$ whose
denominators are at most $L$.  Translate the Farey sequence of order $L_n$
to $[n,n+1]$.  If $a/b<c/d$ are consecutive terms of this sequence, then
$bc-ad=1$ and $b+d>L_n$; these standard Farey-sequence facts
may be found, for example, in \cite[Chapter~III]{HardyWright2008}.  Since
$b,d\ge1$, we have $bd\ge b+d-1\ge L_n$, so the geodesic semicircle joining
them has Euclidean height $1/(2bd)\le(2L_n)^{-1}<\mu_n$.

After the duplicated integer endpoints are identified, the translated
sequences form a strictly increasing, locally finite, doubly infinite
sequence of Farey vertices $(r_j)_{j\in\mathbb Z}$.  Let $\mathcal P$ be the
union of the Farey edges joining $r_j$ to $r_{j+1}$.  It is the graph of a
continuous function $h_{\mathcal P}$, and the inclusion of $\mathcal P$ in
$\C$ is proper, meaning that compact subsets of $\C$ meet only compact
subarcs of $\mathcal P$.  By construction,
\begin{equation}\label{eq:path-below}
 0\le h_{\mathcal P}(x)<\beta(x)\qquad(x\in\R).
\end{equation}
Set $\Omega=\{x+iy:y>h_{\mathcal P}(x)\}$.  By
\eqref{eq:path-below},
\begin{equation}\label{eq:domain-inclusions}
 \Omega_0\subset\Omega\subset\Hplus.
\end{equation}
Figure~\ref{fig:domains} shows the geometry of these two domains.

\begin{figure}[tbp]
\centering
\begin{tikzpicture}[x=1.05cm,y=1.05cm]

  % A schematic rapidly decaying comparison boundary
  \def\betaexpr{0.76*exp(-0.22*\x*\x)}

  % Shade Omega_0
  \fill[blue!5]
    plot[smooth,domain=-3.4:3.4,samples=160]
      (\x,{\betaexpr})
    -- (3.4,1.75) -- (-3.4,1.75) -- cycle;

  % Boundary of Omega_0
  \draw[blue!65!black,thick]
    plot[smooth,domain=-3.4:3.4,samples=160]
      (\x,{\betaexpr});

  % Real axis
  \draw[gray] (-3.55,0)--(3.55,0);

  % Macro for a semicircular Farey edge
  \newcommand{\fareyarc}[2]{%
    \draw[black,thick]
      (#1,0) arc[start angle=180,end angle=0,radius=#2];
  }

  % Schematic Farey-edge path.
  % The arcs decrease toward both ends and remain below
  % the comparison boundary.
  \fareyarc{-3.40}{0.050}
  \fareyarc{-3.30}{0.055}
  \fareyarc{-3.19}{0.065}
  \fareyarc{-3.06}{0.080}
  \fareyarc{-2.90}{0.100}
  \fareyarc{-2.70}{0.130}
  \fareyarc{-2.44}{0.170}
  \fareyarc{-2.10}{0.220}
  \fareyarc{-1.66}{0.280}
  \fareyarc{-1.10}{0.340}
  \fareyarc{-0.42}{0.420}
  \fareyarc{ 0.42}{0.340}
  \fareyarc{ 1.10}{0.280}
  \fareyarc{ 1.66}{0.220}
  \fareyarc{ 2.10}{0.170}
  \fareyarc{ 2.44}{0.130}
  \fareyarc{ 2.70}{0.100}
  \fareyarc{ 2.90}{0.080}
  \fareyarc{ 3.06}{0.065}
  \fareyarc{ 3.19}{0.055}
  \fareyarc{ 3.30}{0.050}

  % Domain labels
  \node[fill=blue!5,inner sep=1.5pt]
    at (0,1.30) {$\Omega_0$};

  \node[fill=white,inner sep=1.1pt,font=\small]
    at (0,0.48)
    {$\Omega\setminus\overline{\Omega_0}$};

  % Label for the comparison boundary
  \node[
    blue!65!black,
    fill=white,
    inner sep=1.3pt,
    anchor=west
  ] (omegaLabel) at (2.05,0.63)
    {$\partial\Omega_0$};

  \draw[blue!65!black,thin]
    (omegaLabel.west) -- (1.70,0.40);

  % Label for the Farey-edge boundary
  \node[
    fill=white,
    inner sep=1.3pt,
    anchor=north west
  ] (pLabel) at (2.05,-0.08)
    {$\mathcal P=\partial\Omega$};

  \draw[thin]
    (pLabel.north west) -- (2.42,0.10);

  % Indicate continuation toward infinity
  \node at (-3.52,0.035) {$\cdots$};
  \node at (3.52,0.035) {$\cdots$};

\end{tikzpicture}
\caption{The comparison domain $\Omega_0$ and a schematic portion of the
proper Farey-edge boundary $\mathcal P=\partial\Omega$.  The boundary
$\partial\Omega_0$ approaches the real axis, while
$\mathcal P$ remains strictly below $\partial\Omega_0$.}
\label{fig:domains}
\end{figure}

The one-point compactification $\mathcal P\cup\{\infty\}$ is a Jordan curve.
Indeed, $\mathcal P$ is a proper embedding of $\R$, and its real coordinate
tends to $-\infty$ and $+\infty$ at its two ends.  Hence $\Omega$ is a Jordan
domain in the sphere.  By the Riemann mapping theorem and the Carath\'eodory
boundary theorem, there is a conformal map
\begin{equation}\label{eq:Riemann-map}
 \Phi:\Hplus\longrightarrow\Omega,
 \qquad \Phi(i)=p,\qquad \Phi(\infty)=\infty,
\end{equation}
where the last equality refers to the boundary prime end represented by
curves tending to infinity.  Since $\Omega$ is a Jordan domain, its prime
ends are canonically identified with its boundary points, so the prime-end
closure may here be identified with the ordinary closure in the Riemann
sphere.  Moreover, $\Phi$ extends to a homeomorphism of the prime-end
closures; see, for example,
\cite[Chapters~2 and~3]{Pommerenke1992}.

\begin{proposition}\label{prop:global-reflection}
With $\Phi$ as in \eqref{eq:Riemann-map}, the function $f=q\circ\Phi$ on
$\Hplus$ extends to a real meromorphic function $F$ on $\C$.  Its preimages of
$0,1,\infty$ are real, and each of these three point sets is infinite.
\end{proposition}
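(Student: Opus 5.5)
The plan is to build $F$ in three steps. First define $F$ on $\overline{\Hplus}$ minus the preimages of the cusps. Then reflect across each open Farey edge of $\mathcal P$. Finally show that the singularities at the preimages of the Farey vertices $r_j$ are removable (or poles).

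Let $t_j=\Phi^{-1}(r_j)\in\R$. By the Carathéodory extension and the monotonicity of the boundary correspondence, $(t_j)$ is strictly increasing. It is locally finite, and $t_j\to\pm\infty$ as $j\to\pm\infty$, since $\Phi(\infty)=\infty$ and $\Phi$ is a homeomorphism of the closures. Each open interval $I_j=(t_j,t_{j+1})$ is mapped by $\Phi$ homeomorphically onto the open Farey edge $e_j$ joining $r_j$ to $r_{j+1}$. By Proposition~\ref{prop:modular-cover}(2), $q$ maps $e_j$ into $\Rhat$. Moreover, $q$ is holomorphic in a neighbourhood of $e_j$ in $\Hplus$.

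Since $e_j$ is an analytic arc (a circular arc), $\Phi$ extends across $I_j$ by Schwarz reflection: compose with the anticonformal reflection in the circle containing $e_j$. Hence $f=q\circ\Phi$ extends holomorphically, as a map to $\Chat$, to a neighbourhood of $I_j$, with real boundary values on $I_j$. Because the values of $q$ on $e_j$ avoid $0,1,\infty$ (the open edge lies in $\Hplus$, where $q$ omits them), $f$ is finite and real on $I_j$.

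Schwarz reflection $F(\bar z)=\overline{f(z)}$ then gives a real meromorphic function on $\C\setminus\{t_j\}$. Its values there lie in $\Chat\setminus\{0,1,\infty\}$, both in $\Hplus$ and on the intervals $I_j$, by the above.

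The main obstacle is the analysis at each $t_j$. Fix $j$, let $r=r_j$, $s=s(r)$, and choose $A\in\PSL$ with $A(r)=\infty$. Then $A$ maps the two Farey edges at $r$ to vertical lines $\operatorname{Re}W=c_1$ and $\operatorname{Re}W=c_2$; their ends are Farey vertices, so these are integers. Near $r$ the domain $\Omega$ corresponds to a half-strip $S=\{c_1<\operatorname{Re}W<c_2,\ \operatorname{Im}W>R\}$. Only a neighbourhood of $r$ bounded by the two edges matters, and $A\Omega$ near infinity coincides with such a half-strip because the boundary is locally exactly the two edges.

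Now consider $g=A\circ\Phi$ on a half-disc around $t_j$. The map
\[
 \exp\Bigl(\pi i\,\frac{W-c_1}{c_2-c_1}\Bigr)
\]
sends $S$ conformally onto a punctured half-disc, with the vertical sides going to the real diameter. Composing, we obtain a conformal map from a half-disc at $t_j$ onto a half-disc at $0$ that sends real segments to real segments and $t_j$ to $0$. By reflection this extends to a conformal map $\kappa$ near $t_j$ with $\kappa(t_j)=0$ and $\kappa'(t_j)\neq0$. Then
\[
 e^{\pi i W}=\Bigl(e^{\pi i c_1}\,\kappa\Bigr)^{c_2-c_1}
\]
is holomorphic near $t_j$, vanishing to order $m=c_2-c_1\ge1$. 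By \eqref{eq:cusp-coordinate}, $\chi_s\circ F=h_{r,A}(e^{\pi iW})$ is holomorphic near $t_j$ with a zero of order $m$. So $F$ is meromorphic at $t_j$ with $F(t_j)=s(r_j)$.

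This identifies $F^{-1}(\{0,1,\infty\})$ exactly as $\{t_j\}$, which is real. To see that each of the three sets is infinite, use Proposition~\ref{prop:modular-cover}(3). Among the Farey vertices $r_j$ there are the integers $n$, of parity class $(n,1)$. These alternate between two classes. Next consider consecutive fractions in each translated Farey sequence: for example, at $n+1/2$ (present since $L_n\ge2$) the vector is $(2n+1,2)$, which gives the third class. Hence each value in $\{0,1,\infty\}$ is attained at infinitely many $t_j$.
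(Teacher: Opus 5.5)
Your proposal is correct and takes essentially the same route as the paper. You reflect across the open edges. At each vertex you use $A\in\PSL$ sending the cusp to $\infty$, flatten the half-strip with an exponential, and apply reflection together with removability. You then invoke the cusp coordinate \eqref{eq:cusp-coordinate}, and the parity classes at $n$ and $n+1/2$ give infinitude.

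Two small repairs are needed.
\begin{itemize}
\item The identity should read $e^{\pi iW}=e^{\pi i c_1}\kappa^{c_2-c_1}$. Your version differs from this by a sign, which is harmless.
\item You assert that $\Omega$ corresponds to the strip \emph{between} the two vertical lines rather than to their outside; this needs a one-line check. The paper supplies it via $A(r+iy)=a_0/q_0+i/(q_0^2y)$. Alternatively: a punctured neighbourhood of $\infty$ minus the two vertical rays has two components, the one not between the rays meets $\Hminus$, and $A(\Omega)\subset\Hplus$.
\end{itemize}
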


\begin{proof}
The boundary extension of $\Phi$ and Proposition~\ref{prop:modular-cover}
show that $f$ has real boundary values on the inverse image of every open
edge of $\mathcal P$.  Schwarz reflection therefore extends $f$
holomorphically across the corresponding open interval of $\R$.  It remains
to treat the boundary points that map to Farey vertices.

Fix a finite vertex $r$ of $\mathcal P$, and let $x_r\in\R$ be determined by
$\Phi(x_r)=r$.  Write $r=p_0/q_0$ in lowest terms with $q_0>0$.  Choose
$a_0,b_0\in\mathbb Z$ such that $a_0p_0+b_0q_0=-1$, and set
\[
 A(W)=\frac{a_0W+b_0}{q_0W-p_0}\in\PSL.
\]
Then $A(r)=\infty$. If $u=c_0/d_0$ is a Farey neighbor of $r$, with $d_0>0$, then
\[
 A(u)-\frac{a_0}{q_0}
 =-\frac{d_0}{q_0(q_0c_0-p_0d_0)}.
\]
For the two neighbors of $r$ occurring immediately to its left and right on
$\mathcal P$, the quantities $q_0c_0-p_0d_0$ are respectively $-1$ and $1$.
Thus the two incident Farey edges are carried by $A$ to vertical geodesics on
opposite sides of $a_0/q_0$.  Their finite endpoints are integers, so, after
interchanging them if necessary, these geodesics have equations
$\operatorname{Re}W=k$ and $\operatorname{Re}W=k+N$, where
$k\in\mathbb Z$ and $N\ge1$.  Moreover,
\[
 A(r+iy)=\frac{a_0}{q_0}+\frac{i}{q_0^2y}\qquad(y>0),
\]
which shows explicitly that the side of $\mathcal P$ belonging to $\Omega$
is mapped, near the cusp, to the strip between these two vertical lines.
The complement in $\mathcal P\cup\{\infty\}$ of sufficiently short subarcs
of the two incident edges is compact and does not contain $r$; its image
under $A$ is therefore bounded.  Hence one can choose $Y$ so large that
\[
 A(\Omega)\cap\{\operatorname{Im}W>Y\}
 =\{k<\operatorname{Re}W<k+N,\ \operatorname{Im}W>Y\}.
\]

The map
$\eta(W)=\exp(\bigl(\pi i(W-k)/N\bigr))$ sends this strip end conformally
onto a punctured upper half-disc.  Thus
$\Theta(z)=\eta(A(\Phi(z)))$ is holomorphic in a one-sided neighborhood of
$x_r$ in $\Hplus$, tends to $0$ as $z\to x_r$, and has real boundary values
on both adjacent intervals: the two boundary lines are mapped by $\eta$ to
the positive and negative real axes.  Schwarz reflection extends $\Theta$
holomorphically to a punctured full neighborhood of $x_r$, and the
singularity at $x_r$ is removable because $\Theta$ is bounded there.  Since
$\Theta$ is nonconstant, for some integer $\ell\ge1$ we have
\[
 \Theta(z)=(z-x_r)^\ell g_r(z),
 \qquad g_r(x_r)\ne0,
\]
with $g_r$ holomorphic near $x_r$.  Moreover,
$e^{\pi iA(\Phi(z))}=(-1)^k\Theta(z)^N$.

Let $s=s(r)\in\{0,1,\infty\}$ be the cusp value of $r$.  By
\eqref{eq:cusp-coordinate}, on the original upper half-neighborhood,
\begin{equation}\label{eq:vertex-representation}
 \chi_s(f(z))
 =h_{r,A}\bigl((-1)^k\Theta(z)^N\bigr).
\end{equation}
On either boundary line of the cusp strip, $q(A^{-1}W)$ is real.  It follows
that $h_{r,A}$ takes real values on a nontrivial real interval ending at the
origin, and therefore
$h_{r,A}(\overline\zeta)=\overline{h_{r,A}(\zeta)}$ near $0$.  The right-hand
side of \eqref{eq:vertex-representation} consequently supplies a
single-valued meromorphic continuation of $f$ across $x_r$ that agrees with
the edge reflections on both sides.  Since $h_{r,A}'(0)\ne0$, the point
$x_r$ is a zero, a $1$-point, or a pole according as $s=0$, $s=1$, or
$s=\infty$, and its corresponding order is $N\ell$.

The vertices of $\mathcal P$ are locally finite and accumulate only at the
prime end $\infty$.  Their preimages under the boundary homeomorphism of
$\Phi$ have the same property.  The edge reflections and the vertex
continuations agree on overlaps by the identity theorem.  Together they fill
in the values on $\R$ and extend the piecewise formula
\begin{equation}\label{eq:global-reflection}
 F(z)=
 \begin{cases}
  f(z),&\operatorname{Im}z>0,\\
  \overline{f(\overline z)},&\operatorname{Im}z<0
 \end{cases}
\end{equation}
to one meromorphic function on $\C$.  This function is real.

The covering $q$ omits $0,1,\infty$ in $\Hplus$.  On an open Farey edge it
takes values in $\Rhat\setminus\{0,1,\infty\}$, while at a Farey vertex the
continuation takes the associated cusp value.  Hence
$F^{-1}(\{0,1,\infty\})\subset\R$.  Finally, because $L_n\ge2$, the part of
$\mathcal P$ over $[n,n+1]$ contains the vertices $n$, $n+1/2$, and $n+1$.
Representatives with coprime coordinates for these vertices have, modulo
$2$, the three nonzero parity classes $(0,1)$, $(1,0)$, and $(1,1)$, in an
order depending on the parity of $n$.  Proposition~\ref{prop:modular-cover}
therefore shows that all three cusp values occur over every such interval.
Each of $F^{-1}(0)$, $F^{-1}(1)$, and $F^{-1}(\infty)$ is consequently
infinite.
\end{proof}

\section{Non-Blaschke point divisors}

We now prove the quantitative assertion in Theorem~\ref{thm:main}.  Fix
$a\in\Chat\setminus\{0,1,\infty\}$, and choose a lift
$\tau_a=\xi+i\nu\in\Hplus$ with $q(\tau_a)=a$.

For $m\ge2$, put
$D_m=\{d:1\le d\le2m,\ \gcd(d,2m)=1\}$.  If $d\in D_m$, then
$\gcd(d,4m)=1$, so let $\alpha_{m,d}\in\{1,\ldots,4m-1\}$ be the inverse
of $d$ modulo $4m$, and set
$b_{m,d}=(\alpha_{m,d}d-1)/(2m)$.  Then
$\alpha_{m,d}d-2mb_{m,d}=1$.  Moreover,
$\alpha_{m,d}d\equiv1\pmod{4m}$ shows that $b_{m,d}$ is even, while both
$d$ and $\alpha_{m,d}$ are odd.  Hence
\[
 \gamma_{m,d}=
 \begin{pmatrix}
  \alpha_{m,d}&b_{m,d}\\
  2m&d
 \end{pmatrix}
 \in\Gamma(2).
\]
The determinant identity gives
\begin{equation}\label{eq:orbit-identity}
 \gamma_{m,d}\tau_a
 =\frac{\alpha_{m,d}}{2m}
  -\frac{1}{2m(2m\tau_a+d)}.
\end{equation}

Let $n_m=\lceil4\sqrt{\log m}\rceil$ and
$w_{m,d}=\gamma_{m,d}\tau_a+2n_m$.  Translation by $2n_m$ belongs to
$\Gamma(2)$, so $q(w_{m,d})=a$.  Write
$w_{m,d}=x_{m,d}+iy_{m,d}$.  From \eqref{eq:orbit-identity},
\begin{equation}\label{eq:orbit-height}
 y_{m,d}=\frac{\nu}{(2m\xi+d)^2+4m^2\nu^2}.
\end{equation}
Equations \eqref{eq:orbit-identity} and \eqref{eq:orbit-height} imply that,
uniformly for $d\in D_m$,
\begin{equation}\label{eq:orbit-bounds}
 \frac{c_a}{m^2}\le y_{m,d}\le\frac{C_a}{m^2},
 \qquad x_{m,d}=2n_m+O_a(1),
\end{equation}
where the notation $O_a(1)$ means that the implicit constant depends only on
the chosen lift $\tau_a$.  For example, one may take
$c_a=\nu/[4((|\xi|+1)^2+\nu^2)]$ and $C_a=1/(4\nu)$; the real correction
term in \eqref{eq:orbit-identity} has modulus at most
$(4\nu m^2)^{-1}$.

For all sufficiently large $m$, \eqref{eq:orbit-bounds} gives
$n_m\le x_{m,d}\le3n_m$.  Together with \eqref{eq:beta-estimates}, this
yields
\[
 \beta(x_{m,d})
 \le \varepsilon e^{\varepsilon^2}e^{-n_m^2/2}
 \le \varepsilon e^{\varepsilon^2}m^{-8}
 \le\frac{y_{m,d}}2.
\]
Thus $w_{m,d}\in\Omega_0\subset\Omega$ and $0<y_{m,d}\le1$ once $m$ is
large enough.

The points $w_{m,d}$ are all distinct.  Indeed, write
\[
 \sigma_{m,d}=
 \begin{pmatrix}1&2n_m\\0&1\end{pmatrix}\gamma_{m,d}\in\Gamma(2),
 \qquad w_{m,d}=\sigma_{m,d}\tau_a.
\]
Equality of two selected points would make the quotient of the corresponding
deck transformations fix $\tau_a$.  A deck transformation of a universal
covering acts freely, so the two transformations agree as elements of $\PSL$.
Left multiplication by the translation matrix does not change the lower row
$(2m,d)$.  Since both entries of this lower row are positive, the sign
ambiguity in $\PSL$ is removed, and equality of the transformations forces
equality of the pairs $(m,d)$. Thus level $m$ contains exactly
$\varphi(2m)$ selected points, where $\varphi$ is Euler's totient function,
$\varphi(n)=\#\{1\le k\le n:\gcd(k,n)=1\}$.

Put $z_{m,d}=\Phi^{-1}(w_{m,d})$.  Since both $q$ and $\Phi$ are locally
conformal, these are distinct simple $a$-points of $F$ in $\Hplus$.
Conformal invariance of Green functions, domain monotonicity,
\eqref{eq:domain-inclusions}, and Lemma~\ref{lem:green-comparison}, specifically
\eqref{eq:green-comparison}, give
\begin{align}
 G_{\Hplus}(i,z_{m,d})
 &=G_\Omega(p,w_{m,d})\notag\\
 &\ge G_{\Omega_0}(p,w_{m,d})\notag\\
 &\ge\frac{y_{m,d}}{12(1+x_{m,d}^2)}
 \ge\frac{c'_a}{m^2\log m}                 \label{eq:fiber-green}
\end{align}
for all sufficiently large $m$, where $c'_a>0$ depends only on $\tau_a$.

We now invoke the classical summatory estimate
\begin{equation}\label{eq:summatory-totient}
 \sum_{n\le X}\varphi(n)=\frac{3}{\pi^2}X^2+O(X\log X),
\end{equation}
see \cite[Theorem~3.7]{Apostol1976}.  By
\eqref{eq:summatory-totient}, for some absolute constant $c_0>0$ and all
sufficiently large $N$,
$\sum_{N<m\le2N}\varphi(m)\ge c_0N^2$.  Since
$\varphi(2m)\ge\varphi(m)$, we obtain
\[
 \sum_{N<m\le2N}\frac{\varphi(2m)}{m^2\log m}
 \ge\frac{c_0}{4\log(2N)}.
\]
Summing over the disjoint dyadic blocks $N=2^j$ proves
\begin{equation}\label{eq:totient-divergence}
 \sum_{m=2}^\infty\frac{\varphi(2m)}{m^2\log m}=\infty.
\end{equation}
Combining \eqref{eq:fiber-green} and \eqref{eq:totient-divergence}, and
discarding finitely many initial levels, gives an integer $m_0(a)\ge2$ such
that
\begin{equation}\label{eq:non-blaschke}
 \sum_{m\ge m_0(a)}\ \sum_{d\in D_m}
 G_{\Hplus}(i,z_{m,d})=\infty.
\end{equation}
Since $\operatorname{Re}w_{m,d}\ge n_m\to\infty$ whereas $\Phi(i)=p$ is
fixed, $m_0(a)$ may be enlarged so that none of the selected points is the
base point $i$.  Thus a subset of the $a$-point divisor in $\Hplus$ already
fails the Blaschke condition.  Since $a$ was arbitrary, this holds for every
$a\in\Chat\setminus\{0,1,\infty\}$.

For the lower half-plane, apply the upper-half-plane assertion to
$\overline a$ and reflect the resulting points.  Indeed,
\eqref{eq:global-reflection} gives
$F(\overline z)=\overline{F(z)}$, and
$G_{\Hminus}(-i,\overline z)=G_{\Hplus}(i,z)$.

Finally, suppose that $F$ were of bounded type in $\Hplus$, and write
$F=U/V$ with bounded holomorphic functions $U,V$ and $V\not\equiv0$.
For any fixed $a\in\C\setminus\{0,1\}$, the bounded holomorphic function
$U-aV$ is not identically zero because $F$ is nonconstant, and it vanishes at
every selected simple $a$-point $z_{m,d}$.  Its zero divisor would therefore
have a divergent Green sum by \eqref{eq:non-blaschke}, contradicting the
Blaschke condition for zeros of a bounded holomorphic function.  Hence $F$
is not of bounded type in $\Hplus$.  Real symmetry gives the same conclusion
in $\Hminus$.  Together with Proposition~\ref{prop:global-reflection}, this
completes the proof of Theorem~\ref{thm:main}.

\end{document}